\newtheorem{theorem}{Theorem}[section]
\titleformat{\section}{\bfseries\Large}{\thesection}{1em}{}
\titleformat{\subsection}{\bfseries\normalsize}{\thesubsection}{1em}{}
\theoremstyle{definition}
\numberwithin{equation}{section}
\newtheoremstyle{boldquestion}  
{\topsep}                      
{\topsep}                      
{\normalfont}                 
{}                            
{\bfseries}                   
{.}                           
{ }                           
{\thmname{#1}\thmnumber{ #2}} 
\theoremstyle{boldquestion}
\newtheorem{question}{Question}
\newtheoremstyle{boldremark}  
{}{}                     
{\normalfont}              
{}                         
{\bfseries}                
{.}                        
{ }                       
{}
\theoremstyle{boldremark}
\newtheorem{remark}{Remark}
\renewenvironment{proof}[1][\proofname]{%
	\par\pushQED{\qed}\normalfont
	\topsep6\p@\@plus6\p@\relax
	\trivlist
	\item[\hskip\labelsep\bfseries #1\@addpunct{.}]%
}{%
	\popQED\endtrivlist\@endpefalse
}
\begin{document}
\setcounter{page}{1}


\title[]{Concerning FAT Colorings of Graphs}
\author[]{Saeed Shaebani}
\thanks{{\scriptsize
\hskip -0.4 true cm MSC(2020): Primary: 05C15; Secondary: 05C07.
\newline Keywords: Vertex Degrees, Fair And Tolerant Vertex Coloring, FAT Coloring.\\
 }}

\begin{abstract}
\noindent Let $G$ be a graph and let $C$ be a color set of cardinality $k$. Suppose $c \colon V(G) \to C$ is a (not necessarily proper) vertex coloring whose all color classes are $V_1$, $V_2$, $\dots$, $V_k$, each of which is nonempty. The vertex coloring $c$ is said to be a  {\it FAT $k$-coloring of $G$} if there exist real numbers $\alpha$ and $\beta$, both in $[0,1]$, such that for every vertex $v\in V(G)$ and every color class $V_i$ the following equalities hold:
$$
\bigl| V_i \cap N(v) \bigr|  =
\begin{cases}
	\alpha \deg (v)  &  \mbox{ if }  \ \   v \notin V_i \\
	\beta \deg (v)   &  \mbox{ if }  \ \   v \in V_i . 
\end{cases}
$$
Let $k > 1$ be a fixed integer, and let $\alpha \in \left[ 0 , \frac{1}{k-1} \right) \cap \mathbb{Q}$
and $\beta \in [ 0 , 1 ] \cap \mathbb{Q}$ be
some fixed rational numbers satisfying $ \beta + (k-1) \alpha = 1 $.
It was asked for the existence of a graph $G$ with $\delta (G) > 0$ admitting some FAT $k$-coloring
with the corresponding parameters $\alpha$ and $\beta$.
This paper settles the question in the affirmative.
We explicitly construct a sequence 
$\displaystyle\left\{G_n\right\}_{n=1}^{\infty}$ of pairwise non-homomorphically equivalent graphs, 
each being a regular graph of positive degree, admitting a FAT $k$-coloring with the corresponding 
parameters $\alpha$ and $\beta$.
\end{abstract}

\maketitle
\section{Introduction}

This article is devoted to the study of graph coloring parameters. Except where $[\,\cdot\,]$ denotes bibliographic citations, the notation $[n]$ where $n$ is a positive integer, will be used to represent the set $\{1,2,\ldots,n\}$.

The present study is confined to simple graphs whose vertex sets are finite and nonempty. We adopt conventional graph-theoretic notation throughout, in accordance with the standard treatments found in \cite{Bollobas1998}, \cite{BondyMurty2008}, \cite{Diestel2017}, and \cite{West2001}.

The {\it order} of a graph is the number of its vertices, and the {\it size} of that graph is the number of its edges.
Let $G$ be a graph and $v$ a vertex of $G$. The {\it neighborhood} of $v$, denoted $N_G(v)$, is the collection of vertices adjacent to $v$; each element of $N_G(v)$ is referred to as a
{\it neighbor} of $v$. The {\it degree} of $v$ is the cardinality $|N_G(v)|$, and is written $\deg_G(v)$. When the ambient graph $G$ is evident from context, we shall abbreviate $N_G(v)$ and $\deg_G(v)$ simply as $N(v)$ and $\deg(v)$, respectively.

\noindent
By a {\it regular} graph we mean a graph in which all vertices have the same degree.

For a graph $G$ and every subset $S$ of $V(G)$, the {\it induced subgraph of $G$ on $S$}, denoted by $G[S]$, is a graph with vertex set $V(G[S]) := S$ whose edge set equals
$$E(G[S]) := \big\{xy \colon \ \ x \in S  \mbox{ and } y \in S \mbox{ and } xy \in E(G) \big\} .$$

By a {\it clique} in a graph $G$, we mean a subset $S \subseteq V(G)$ in such a way that $G[S]$ is a complete graph, that is, every two pairwise distinct vertices in $S$ are adjacent.
The {\it clique number} of $G$, written $\omega (G)$, stands for the maximum cardinality of a clique in $G$.

Let $G$ be a graph. A {\it vertex coloring} of $G$ is a function $c\colon V(G)\to C$, where $C$ is designated the {\it color set} and each element of $C$ is called a {\it color}. For a vertex $v\in V(G)$, the equality $c(v)=r$ indicates that $v$ is assigned the color $r$.

\noindent
For any color $r\in C$, the {\it color class} of $r$ is the subset $c^{-1}(r)=\{v\in V(G) \colon \ c(v)=r\}$; equivalently, it consists precisely of those vertices that are assigned the color $r$.

\noindent
The collection of all nonempty color classes induced by a coloring
$c \colon V(G) \to C$ constitutes a partition of $V(G)$. Independent of the particular labels assigned to colors, this partition decomposes $V(G)$ into pairwise disjoint blocks, each block consisting of those vertices that share the same color. Conversely, any partition of $V(G)$ may be realized as the family of nonempty color classes produced by a suitable vertex coloring of $G$.

For a graph $G$, let $v\in V(G)$ be an arbitrary vertex and let $S\subseteq V(G)$ be any subset. Define $e(v,S)$ to be the number of neighbors of $v$ that lie in $S$. More precisely,
$$e (v,S) := \bigl| S \cap N(v) \bigr| .$$

\noindent
A coloring $c \colon V(G) \to C$ is {\it proper} if every pair of adjacent vertices are assigned distinct colors.
The minimum cardinality of a set $C$ for which the graph $G$ admits a proper coloring $c \colon V(G) \to C$ is called the {\it chromatic number} of $G$ and it is denoted by $\chi (G)$.

Let $G$ be a graph and let $C$ be a color set of cardinality $k$. Suppose $c \colon V(G) \to C$ is a (not necessarily proper) vertex coloring whose all color classes are $V_1$, $V_2$, $\dots$, $V_k$, each of which is nonempty. The vertex coloring $c$ is said to be a {\it Fair And Tolerant vertex coloring of $G$ with $k$ colors} if there exist real numbers $\alpha$ and $\beta$, both in $[0,1]$, such that for every vertex $v\in V(G)$ and every color class $V_i$ the following equalities hold:
$$
e \left( v, V_i \right) =
\begin{cases}
	\alpha \deg (v)  &  \mbox{ if }  \ \   v \notin V_i \\
	\beta \deg (v)   &  \mbox{ if }  \ \   v \in V_i . 
\end{cases}
$$

\medskip

\noindent
The notion of Fair And Tolerant vertex colorings was introduced and examined by Beers and Mulas in \cite{beers20252}. In that work they adopted the concise label “{\it FAT Colorings}”, and they likewise used the term “{\it FAT $k$-coloring}” to denote a Fair And Tolerant vertex coloring employing $k$ colors.

\medskip

Let $G$ be a graph containing at least one vertex $v$ with $\deg(v)>0$. Assume $G$ admits a FAT $k$-coloring whose nonempty color classes are $V_1$, $V_2$, $\dots$, $V_k$, and let $\alpha,\beta\in[0,1]$ denote the associated parameters. For such a vertex $v$ we obtain
$$\deg (v) = \displaystyle\sum _{i=1} ^k  e \bigl( v , V_i\bigr) =	\beta \deg (v) + (k-1) \alpha \deg (v) ,$$
whence the relation
$$
\beta+(k-1)\alpha=1
$$
follows, establishing a constraint among $k$, $\alpha$, and $\beta$ as observed in \cite{beers20252}.

\medskip

Let $G$ be a graph. On one hand, a canonical FAT $1$-coloring of $G$ is obtained by taking the sole color class to be $V(G)$ and setting $\alpha=0$ and $\beta=1$. Under this assignment, for each vertex $v\in V(G)$ we have $e(v,V(G))=\deg(v)=\beta \deg(v)$, so the defining relations of a FAT $1$-coloring are satisfied. On the other hand, in every FAT coloring of $G$, each color class is nonempty and the color classes form a partition of $V(G)$; so their number cannot exceed the cardinality of $V(G)$.
Accordingly, the set of all positive integers $k$
for which some FAT $k$-coloring of $G$ exists, is nonempty and bounded. Beers and Mulas \cite{beers20252} designated the maximum element of this set as the {\it FAT chromatic number} of $G$,
and they also represented it by $\chi ^{{\rm FAT}} (G)$. More precisely, the FAT chromatic number of $G$, denoted  $\chi ^{{\rm FAT}} (G)$, is the largest positive integer $k$
for which $G$
admits a FAT $k$-coloring \cite{beers20252}.

\section{The Main Result}

Let us regard a positive integer $k > 1$ as fixed. As discussed earlier in the previous section, whenever some graph $G$ with $E (G) \neq \varnothing$
admits a FAT $k$-coloring with associated parameters $\alpha$ and $\beta$,
then the relation
$$ \beta + (k-1) \alpha = 1 $$
is satisfied \cite{beers20252}.
Since $\alpha$ and $\beta$ are assumed to be nonnegative, and $k-1$ is positive, one finds that
$$\alpha \leq \frac{1}{k-1} .$$ 
On the other hand, it was shown by Beers and Mulas \cite{beers20252} that the complete graph $K_k$ on vertex set
$\left\{ x_1 , x_2 , \dots , x_{k} \right\}$ admits a FAT $k$-coloring
with corresponding parameters $\alpha = \frac{1}{k-1}$ and $\beta = 0$ whose color classes are
the singletons $\left\{ x_1 \right\}$, $\left\{ x_2 \right\}$, $\dots$, $\left\{ x_{k} \right\}$.
So, the following interesting question was raised in \cite{beers20252}.

\medskip

\begin{question} {\rm (Beers and Mulas \cite{beers20252})}  \label{Q7} \\
	Let $k > 1$ be a fixed integer and $\alpha \in \left[ 0 , \frac{1}{k-1} \right)$ and $\beta \in [ 0 , 1 ]$ be
	some fixed real numbers satisfying $ \beta + (k-1) \alpha = 1 $.
	Does there exist a graph $G$ admitting some FAT $k$-coloring
	with the corresponding parameters $\alpha$ and $\beta$?
\end{question}

\medskip

The aim of this section is to provide an answer to Question \ref{Q7}. To that end, we first note the following two remarks.

\medskip

\begin{remark} \label{S1}
	Let $\bar{K}_{k}$, the complement of $K_k$, be a graph on vertex set $V \left( \bar{K}_{k} \right) := \left\{ x_1 , x_2 , \dots , x_{k} \right\}$
	whose edge set equals $E \left( \bar{K}_{k} \right) := \varnothing$.
	Let us regard the coloring of $\bar{K}_{k}$ with exactly $k$ colors whose color classes are
	the singletons $V_1 := \left\{ x_1 \right\}$, $V_2 := \left\{ x_2 \right\}$, $\dots$, $V_k := \left\{ x_{k} \right\}$.
	Since $E \left( \bar{K}_{k} \right) := \varnothing$,
	for each vertex $x_i$ and every color class $V_j$ we have
	$
	e \left( x_i, V_j \right) = 0$. Besides, since $\deg \left( x_i \right) = 0$ for all $i = 1 , 2 , \dots , k$,
	we observe that every arbitrary $\alpha , \beta \in [0,1]$ with $ \beta + (k-1) \alpha = 1 $ satisfy
	$$ \alpha \deg \left(x_i\right) =	\beta  \deg \left(x_i\right) = 0 .$$
	Thus, the following equalities hold:
	  $$
	  e \left( x_i, V_j \right) =
	  \begin{cases}
	  	\alpha \deg \left(x_i\right)  &  \mbox{ if }  \ \   x_i \notin V_j \\
	  	\beta  \deg \left(x_i\right)  &  \mbox{ if }  \ \   x_i \in V_j . 
	  \end{cases}
	  $$
	  We conclude that for every arbitrary $\alpha , \beta \in [0,1]$ with $ \beta + (k-1) \alpha = 1 $,
	  the edgeless graph $\bar{K}_{k}$ admits some FAT $k$-coloring with associated parameters $\alpha$ and $\beta$.
\end{remark}

\medskip

\begin{remark} \label{S2}
	In light of Remark \ref{S1}, we henceforth restrict the graphs considered in Question \ref{Q7} to those with no isolated vertices (vertices of degree zero).
	Let $k$, $\alpha$, and $\beta$ be some fixed real numbers such that
	$$\begin{array}{lcccccr}
		k \in \mathbb{N} \setminus  \{1\} , &  &  \alpha , \beta \in [0,1] , &  &  {\rm and}  &  &  \beta + (k-1) \alpha = 1 ,
	\end{array}$$
	 and also let $G$ be a graph with $\delta (G) > 0$ admitting some FAT $k$-coloring with corresponding parameters $\alpha$ and $\beta$ whose nonempty
	 color classes are $V_1$, $V_2$, $\dots$, $V_k$. Now, applying the defining condition of FAT colorings, namely
	 $$
	 e \left( v, V_i \right) =
	 \begin{cases}
	 	\alpha \deg (v)  &  \mbox{ if }  \ \   v \notin V_i \\
	 	\beta \deg (v)   &  \mbox{ if }  \ \   v \in V_i , 
	 \end{cases}
	 $$
	 to the color class $V_1$ and to some vertices $u$ and $w$ with $u \notin V_1$ and $w \in V_1$, yields
	 $$\begin{array}{lcccr}
	 	\alpha = \dfrac{e \left( u , V_1 \right)}{\deg (u)} \in \mathbb{Q} &   &   {\rm and}  &  &  \beta = \dfrac{e \left( w , V_1 \right)}{\deg (w)} \in \mathbb{Q} .
	 \end{array}$$
\end{remark}

\bigskip

On the basis of Remarks \ref{S1} and \ref{S2}, Question \ref{Q7} may be restated as follows.

\medskip

\begin{question} (Restatement of Question \ref{Q7})  \label{Q8} \\
	Let $k > 1$ be a fixed integer, and let $\alpha \in \left[ 0 , \frac{1}{k-1} \right) \cap \mathbb{Q}$
	and $\beta \in [ 0 , 1 ] \cap \mathbb{Q}$ be
	some fixed rational numbers satisfying $ \beta + (k-1) \alpha = 1 $.
	Does there exist a graph $G$ with $\delta (G) > 0$ admitting some FAT $k$-coloring
	with the corresponding parameters $\alpha$ and $\beta$?
\end{question}

\medskip
The following theorem settles Question \ref{Q8} in the affirmative.
Before proceeding to the theorem, it is requisite to introduce some definitions.
A {\it homomorphism} from a graph $G$ to a graph $H$, is defined as a mapping $f \colon V(G) \to V(H)$ such that
the implication
$$xy \in E(G) \ \Longrightarrow \ f(x)f(y) \in E(H)$$
holds for every pair of vertices $x$ and $y$ of $G$.
The notation $G \longrightarrow H$ signifies the existence of a homomorphism from $G$ to $H$.
We say that two graphs $G$ and $H$ are {\it homomorphically equivalent} precisely when both $G \longrightarrow H$ and $H \longrightarrow G$
are valid. Homomorphically equivalent graphs are known to enjoy a considerable variety of common characteristics.
For instance, homomorphically equivalent graphs share the same clique number as well as the same chromatic number.
An extensive treatment of topics related to graph homomorphisms can be found in \cite{MR2089014}.

At this stage, we are in a position to formally state and establish the main theorem of the paper.

\medskip

\begin{theorem} \label{positive}
	Let $k > 1$ be a fixed integer, and let $\alpha \in \left[0, \tfrac{1}{k-1}\right) \cap \mathbb{Q}$ 
	and $\beta \in [0,1] \cap \mathbb{Q}$ be fixed rational numbers satisfying 
	$\beta + (k-1)\alpha = 1$. Then there exists a sequence 
	$\displaystyle\left\{G_n\right\}_{n=1}^{\infty}$ of pairwise non-homomorphically equivalent graphs, 
	each being a regular graph of positive degree, admitting a FAT $k$-coloring with the corresponding 
	parameters $\alpha$ and $\beta$.
\end{theorem}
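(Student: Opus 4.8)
The plan is to reduce the whole problem to a single base graph $H$ and then spread it into an infinite family by tensoring with odd cycles of growing length.

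First I would build the base graph. Write $\alpha = a/m$ in lowest terms (with $a=0$, $m=1$ if $\alpha = 0$), so that $\beta = (m-(k-1)a)/m$; the point is that $\beta > 0$ forces the integer $\beta m = m - (k-1)a$ to be at least $1$, and this is exactly where the hypothesis $\alpha < \tfrac{1}{k-1}$ enters. Fix an integer $r \ge 2$ and set $d := mr$, $s := ar$, $t := (\beta m)r$, so that $t \ge 2$, $s \ge 0$, and $t + (k-1)s = d$. I then take $H$ on a disjoint union $V_1 \sqcup \cdots \sqcup V_k$ of $k$ classes, each of some large size $N \ge \max(t+1,s)$ (with $Nt$ even): inside each $V_i$ I place a copy of a fixed non-bipartite $t$-regular graph $R$ (such an $R$ exists precisely because $t \ge 2$), and between each pair of distinct classes I place an $s$-regular bipartite graph (e.g. the circular one $x \sim y \iff x-y \in \{0,1,\dots,s-1\}$ under $V_i \cong \mathbb{Z}_N$, which is $s$-regular once $s \le N$). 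Then $H$ is $d$-regular of positive degree, and the partition $\{V_i\}$ satisfies $e(v,V_i) = t = \beta\deg(v)$ for $v \in V_i$ and $e(v,V_j) = s = \alpha\deg(v)$ for $j \ne i$; hence $\{V_i\}$ is a FAT $k$-coloring of $H$ with parameters $\alpha,\beta$, and $H$ is non-bipartite because each copy of $R$ contains an odd cycle.

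The enabling lemma is that the tensor (categorical) product preserves FAT colorings. If $H$ carries a FAT $k$-coloring with classes $W_1,\dots,W_k$ and parameters $\alpha,\beta$, and $F$ is any regular graph of positive degree, then $F \times H$ (vertex set $V(F)\times V(H)$, with $(f,h)\sim(f',h')$ iff $ff'\in E(F)$ and $hh'\in E(H)$) carries a FAT $k$-coloring with the same parameters via $\widetilde{W_i} := V(F)\times W_i$. Indeed, coloring $(f,h)$ by the $H$-color of $h$, a neighbor $(f',h')$ lies in $\widetilde{W_j}$ iff $f'\in N_F(f)$ and $h'\in N_H(h)\cap W_j$, so $e_{F\times H}\bigl((f,h),\widetilde{W_j}\bigr) = \deg_F(f)\,e_H(h,W_j)$ while $\deg_{F\times H}(f,h) = \deg_F(f)\deg_H(h)$; the factor $\deg_F(f)$ cancels in the ratio, which therefore equals $\beta$ or $\alpha$ exactly as in $H$. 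Moreover $F \times H$ is $(\deg F)(\deg H)$-regular of positive degree, and each $\widetilde{W_i}$ is nonempty.

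Finally I set $G_n := C_{2n+1} \times H$. By the lemma each $G_n$ is regular of positive degree and admits a FAT $k$-coloring with parameters $\alpha,\beta$, so only the separation up to homomorphic equivalence remains; I would carry this out with the odd girth $g(\cdot)$. Two facts suffice: (i) odd girth is a homomorphic-equivalence invariant, since a homomorphism carries an odd cycle to an odd closed walk of the same length, which contains an odd cycle of at most that length, whence $G \to G'$ gives $g(G') \le g(G)$ and equivalence gives equality; and (ii) $g(A\times B) = \max\bigl(g(A),g(B)\bigr)$, because closed odd walks in $A\times B$ project to, and are assembled from, closed odd walks of equal length in $A$ and in $B$, and a graph has a closed odd walk of (odd) length $\ell$ precisely when its odd girth is at most $\ell$. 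As $H$ is non-bipartite, $g_H := g(H)$ is finite and $g(G_n) = \max(2n+1,\,g_H)$, which is strictly increasing once $2n+1 > g_H$; discarding the finitely many initial indices and reindexing yields graphs of pairwise distinct odd girth, hence pairwise non-homomorphically equivalent. I expect the genuine obstacle to be exactly this separation: a FAT coloring survives so many operations that isolating graphs which are truly inequivalent (not merely non-isomorphic) demands a homomorphism invariant one can drive to infinity while the coloring parameters stay pinned, and the clean multiplicativity of odd girth under the tensor product is what makes that possible.
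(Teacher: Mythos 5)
Your proof is correct, and it takes a genuinely different route from the paper's. The paper argues directly and explicitly: for $\alpha = a_0/b_0 > 0$ it rescales $a := (g+n)a_0$, $b := (g+n)b_0$, and builds $G_n$ on the vertex set $[k]\times[a]\times[\ell]$ with $\ell := b-(k-1)a+1$ (complete $k$-partite layers indexed by the third coordinate, joined by cliques $K_{\ell}$ running across the layers), colored by the first coordinate; the family is then separated by the clique number, $\omega(G_n)=\ell$, which grows with $n$. Your base graph $H$ is in essence a generalization of a single such graph (the paper's within-class graph is a disjoint union of cliques, i.e.\ one particular $(\ell-1)$-regular choice of your $R$, and its between-class graph is one particular $a$-regular bipartite graph), but where the paper manufactures infinitely many examples by rescaling the numerical parameters with $n$, you keep one $H$ fixed and generate the family by tensoring with odd cycles, using two facts absent from the paper: the transfer lemma that a categorical product with a regular graph preserves FAT classes and parameters (the factor $\deg_F(f)$ cancels in the ratio), and the invariance and multiplicativity of odd girth, $g(A\times B)=\max\bigl(g(A),g(B)\bigr)$; both are verified correctly. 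Your route buys a reusable lemma (FAT colorability with pinned parameters is preserved by tensor products), a family of constant degree (the paper's degrees grow with $n$), and --- since $\omega(A\times B)\le\min\bigl(\omega(A),\omega(B)\bigr)$ and $\omega(C_{2n+1})=2$ for $n\ge 2$ --- graphs that are triangle-free, so the paper's invariant (clique number) could never separate your family and odd girth is genuinely needed; the paper's route buys a completely explicit, self-contained construction with no homomorphism machinery. One point to tighten: $R$ must be a non-bipartite $t$-regular graph on \emph{exactly} $N$ vertices, since its order has to equal the class size, so the parenthetical ``such an $R$ exists precisely because $t\ge 2$'' is not quite enough (it fixes the order of $R$ independently of $N$); the simplest repair is to take $N$ a multiple of $t+1$ with $N\ge s$ and let $R$ be a disjoint union of copies of $K_{t+1}$, which is $t$-regular, non-bipartite for $t\ge 2$, and automatically satisfies the parity constraint.
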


\begin{proof}
    If $\alpha = 0$, then for each fixed positive integer $n$ let $H^{1}_n$, $H^{2}_n$, $\dots$, $H^{k}_n$ be $k$ vertex disjoint complete graphs,
	each of order $n$. We define $G_n$ to be their union.
	More formally, define the graph $G_n$ with vertex set
	$$ V \left( G_n \right) := V \left( H^{1}_n \right) \cup  V \left( H^{2}_n \right) \cup \dots \cup  V \left( H^{k}_n \right) $$
	whose edge set is
	$$ E \left( G_n \right) := E \left( H^{1}_n \right) \cup  E \left( H^{2}_n \right) \cup \dots \cup  E \left( H^{k}_n \right) . $$
	Now, the sets
	$V\left( H^{1}_n \right)$, $V\left( H^{2}_n \right)$, $\dots$, $V\left( H^{k}_n \right)$ form the color classes of a
	FAT $k$-coloring of $G_n$ with associated parameters $\alpha = 0$ and $\beta = 1$. Moreover, since the clique number of $G_n$
	equals $n$, the graphs $G_i$ and $G_j$ have distinct clique numbers for every pair of distinct positive integers $i$ and $j$;
	hence no two such graphs in the sequence $\displaystyle\left\{G_n\right\}_{n=1}^{\infty}$ are homomorphically equivalent.
	
	For the remainder of the proof, we shall assume that $\alpha > 0$.
	Since $\alpha$ is rational, there exist positive integers $a_{\scriptscriptstyle 0}$ and $b_{\scriptscriptstyle 0}$
	such that	$\alpha = \frac{a_{\scriptscriptstyle 0}}{b_{\scriptscriptstyle 0}}$.
	Moreover, 
	since $0 < \alpha < \frac{1}{k-1}$, we have $b_{\scriptscriptstyle 0} - (k-1) a_{\scriptscriptstyle 0} \in \mathbb{N}$.
	Hence, there exists a positive integer $g$ such that
	$ g \bigl( b_{\scriptscriptstyle 0} - (k-1) a_{\scriptscriptstyle 0} \bigr) > k $. We regard such a positive integer $g$ as fixed.
	
	We now define the sequence $\displaystyle\left\{G_n\right\}_{n=1}^{\infty}$. We proceed to give the general term $G_n$. 
	Fix an arbitrary positive integer $n$; and set
	$$\begin{array}{lcccr}
		a:= (g+n)a_{\scriptscriptstyle 0} &  {\rm and} &  b:= (g+n)b_{\scriptscriptstyle 0} &  {\rm and} &  \ell := b - (k-1) a + 1 . 
	\end{array}$$
	Now, we have $\alpha = \frac{a}{b}$. Also, the positive integer $\ell$ is strictly greater than $k$.
	
    \noindent
	Let $G_n$ be the graph whose vertex set equals the Cartesian product
	$$V(G) := [k] \times [a] \times [\ell] ;$$
	and two vertices $\left( r , s , t \right)$ and $\left( r' , s' , t' \right)$
	are declared to be adjacent in $G_n$ if and only if one of the following two conditions holds:
	$$\begin{array}{llcl}
		  \bullet & \mbox{The first condition:}  &  &  \boxed{t=t' \mbox{ and } r \neq r'}                     \\
		          &                              &  &                                                          \\
		  \bullet & \mbox{The second condition:} &  &  \boxed{t \neq t' \mbox{ and } r=r' \mbox{ and } s=s' .}
	\end{array}$$
	For every fixed $t \in [ \ell ]$, the induced subgraph of $G_n$ on $[k] \times [a] \times \{t\}$ is a complete
	$k$-partite graph with all parts of cardinality $a$.
    Indeed, the induced subgraph of $G_n$ on $[k] \times [a] \times \{t\}$ is a complete $k$-partite graph
    whose $k$ parts are $\{i\} \times [a] \times \{t\}$ for all $i = 1,2,\dots , k$.
    Also, for every fixed $r \in [k]$ and each fixed $s \in [a]$, the induced subgraph of $G_n$ on
    $\{r\} \times \{s\} \times [\ell]$ is a complete graph $K_{\ell}$.
    Besides, each neighbor of $(r,s,t)$ lies within
    $\bigl(  [k] \times [a] \times \{t\}  \bigr) \cup \bigl(  \{r\} \times \{s\} \times [\ell]  \bigr)$.
    Moreover, the neighborhood of the vertex $(r,s,t)$ is $A \cup B$, where
    $$\begin{array}{llcrr}
    	A := \bigl(  [k] \setminus \{r\}  \bigr) \times [a] \times \{t\} &  &  {\rm and}  &  &  B := \{r\} \times \{s\} \times \bigl(  [\ell] \setminus \{t\}  \bigr) .
    \end{array}$$
    Since $A \cap B = \varnothing$, the degree of the vertex $(r,s,t)$ equals $(k-1)a + ( \ell - 1 )$; which is equal to $b$.
    We conclude that $G_n$ is a $b$-regular graph.
    
    \noindent
    If we color every vertex $(r,s,t)$ of $G_n$ with its first coordinate $r$,
    then $V_1$, $V_2$, $\dots$, $V_k$ would be the color classes of this coloring, where
    $$\begin{array}{lcr}
    	V_i := \{i\} \times [a] \times [\ell]     &  \mbox{for all}  &    i = 1 , 2 , \dots , k .
    \end{array}$$
    Now, for each vertex $(r,s,t)$ of $G_n$, the set of its neighbors that are included in $V_r$ is
    $$ V_r \cap N (r,s,t) = \{r\} \times \{s\} \times \bigl(  [\ell] \setminus \{t\}  \bigr) ;$$
    which implies
    $$ \bigl|  V_r \cap N (r,s,t)  \bigr| = \ell - 1 = b - (k-1)a .$$
    Also, the set of all neighbors of $(r,s,t)$ that are included in $V_i$ where $i \neq r$, is equal to
    $$ V_i \cap N (r,s,t) = \{i\} \times [a] \times \{t\} ;$$
    and therefore, the condition $i \neq r$ implies
    $$ \bigl|  V_i \cap N (r,s,t)  \bigr| = a .$$
    Accordingly, for each vertex $(r,s,t)$ and every color class $V_i$ we have:
    $$ \bigl|  V_i \cap N (r,s,t)  \bigr| =
    \begin{cases}
    	a            &  \mbox{  if }  \ \   (r,s,t) \notin V_i \\
    	b - (k-1)a   &  \mbox{  if }  \ \   (r,s,t) \in V_i . 
    \end{cases}$$
    Now, since $G_n$ is $b$-regular and noting that
    $$\begin{array}{lcr}
    	\boxed{a = \dfrac{a}{b} \cdot b = \alpha \cdot b}    &  {\rm and}  &   
    	\boxed{b - (k-1)a = \biggl( 1 - (k-1) \dfrac{a}{b} \biggr) \cdot b = \beta \cdot b} \\
    \end{array}$$
    we conclude that for each vertex $(r,s,t)$ and every color class $V_i$, the following conditions
    of FAT colorings are satisfied:
    $$ \bigl|  V_i \cap N (r,s,t)  \bigr| =
    \begin{cases}
    	\alpha \deg (r,s,t)            &  \mbox{  if }  \ \   (r,s,t) \notin V_i \\
    	\beta  \deg (r,s,t)            &  \mbox{  if }  \ \   (r,s,t) \in V_i . 
    \end{cases}$$
    Thus, a $b$-regular graph $G_n$ together with some appropriate FAT $k$-coloring of $G_n$ realizing the given corresponding parameters
    $\alpha$ and $\beta$ is constructed.
    
    \noindent
    The clique number of the constructed graph $G_n$ equals
    $$\omega \left( G_n \right) = \max \{k,\ell\}.$$
    Since $\ell > k$, it follows that
    $$\omega \left( G_n \right) = (g+n)\bigl( b_{\scriptscriptstyle 0} - (k-1) a_{\scriptscriptstyle 0} \bigr) + 1 .$$
    Thus, $\omega \left( G_i \right) \neq \omega \left( G_j \right) $ whenever $i\neq j$. Consequently, no two graphs in the sequence
    $\displaystyle\left\{G_n\right\}_{n=1}^{\infty}$ are homomorphically equivalent, which completes the proof.
\end{proof}

\vskip 0.4 true cm

\bibliographystyle{amsplain}
\def\cprime{$'$} \def\cprime{$'$}
\providecommand{\bysame}{\leavevmode\hbox to3em{\hrulefill}\thinspace}
\providecommand{\MR}{\relax\ifhmode\unskip\space\fi MR }
\providecommand{\MRhref}[2]{%
	\href{http://www.ams.org/mathscinet-getitem?mr=#1}{#2}
}
\providecommand{\href}[2]{#2}



\bigskip
\bigskip


{\footnotesize {\bf Saeed Shaebani}\; \\ {School of Mathematics and Computer Science}, {Damghan University,} {Damghan, Iran.}\\
{\tt Email: shaebani@du.ac.ir}\\

\end{document}